\newcommand\N{\mathbb{N}}
\newcommand\E{\mathbb{E}}
\newcommand\R{\mathbb{R}}
\newcommand\proba{\mathbb{P}}
\newcommand\e{\mathrm{e}}
\renewcommand\liminf{\underline{\lim}}
\renewcommand\limsup{\overline{\lim}}
\newcommand\ind{\mathds{1}}
\newcommand\eps{\varepsilon}
\newcommand\alphap{{\alpha_p}}
\newcommand{\vertiii}[1]{{\left\vert\kern-0.25ex\left\vert\kern-0.25ex\left\vert #1 
    \right\vert\kern-0.25ex\right\vert\kern-0.25ex\right\vert}}
\renewcommand{\leq}{\leqslant}
\renewcommand{\geq}{\geqslant}
\newtheorem{theorem}{Theorem}
\newtheorem{lemma}{Lemma}
\begin{document}

\title{Heavy tailed large deviations for time averages of
  diffusions: the Ornstein--Uhlenbeck case}
\author{Grégoire Ferré\\
\small Capital Fund Management, 23-25 rue de l'Université, 75007 Paris
}

\date{\today}

\maketitle

\abstract{
  We study large deviations for the time average of the Ornstein--Uhlenbeck process
  raised to an arbitrary power. We prove that beyond a critical value, large deviations
  are subexponential in time, with a non-convex rate function whose main coefficient
  is given by the solution to a Hamilton--Jacobi problem. Although a similar problem was
  addressed in a recent work, the originality of the paper
  is to provide a short, self-contained proof of this result through a couple of standard
  large deviations arguments.
}

\section{Introduction}

Describing the long time behaviour of empirical averages of diffusions is a recurrent theme
in probability theory~\cite{dembo2010large} but also in other fields
such as statistical physics~\cite{touchette2009large}.
While the long time convergence  can be treated via various ergodic
results~\cite{lelievre2016partial}, small
fluctuations around the mean are described by Central Limit Theorems related to
 Poisson equations~\cite{bhattacharya1982functional}, while large
fluctuations are described by so-called large deviations
results~\cite{dembo2010large,ellis2007entropy}. However, since the founding
papers~\cite{donsker1975asymptoticI},
most large deviations results are still derived when
considering the mean of bounded functions. This is unsatisfactory since most
real-life situations deal with unbounded quantities.

A series of recent papers try to address fluctuations
of unbounded observables. This starts with pioneering works concerned with particular
cases such as the Langevin dynamics,
see~\cite{wu2001large} and references therein. To our knowledge, the most general
result in this direction is~\cite{ferre2019large}. This work provides the largest
set of unbounded functions
satisfying standard large deviations asymptotics for a given diffusion. This can be alternatively
viewed as a spectral gap results for unbounded non-self adjoint operators in
Wasserstein-like topologies, generalizing~\cite{donsker1975variational,gartner1977large}
to unbounded state spaces and unbounded observables in a Wasserstein class.
However these results only cover the normal case when large deviations are
exponentially small in time (\textit{i.e.} a spectral gap exists~\cite{ferre2021more})
and do not go beyond.

Outside the realm of the spectral gap case, very few results exist. A first
breakthrough paper partially describes fluctuations for powers of the Ornstein--Uhlenbeck
process through low temperature approximation
arguments~\cite{nickelsen2018anomalous}. To the knowledge
of the author,~\cite{bazhba2022large} is the first full-proved
result in this direction. In this paper, a large deviations principle
(LDP) is proved for a generalized
 Ornstein--Uhlenbeck process raised to some power. The key idea is
to split the dynamics into excursions out of the origin, leading to a small temperature
dynamics similar to the one hinted at in~\cite{nickelsen2018anomalous}.

However, the proofs in~\cite{bazhba2022large} are quite involved and might be difficult to
generalize to other diffusions. The goal of the current paper is to cast ourselves
in a slightly simpler situation (non-restrictive from a tail point of view)
to propose a proof based on a couple of natural large deviations
arguments. In particular, when reducing the dynamics to a sum of independent excursions,
we show that the techniques used
in~\cite{gantert2014large,ferre2022subexponential} are naturally leveraged with
small temperature large deviations~\cite{freidlin1998random}.
With our technique, we retrieve that large deviations are visible at a subexponential scale
with a non-convex, subexponential rate function
whose pre-factor is given by a variational problem.
This result is very similar to the one obtained in~\cite{ferre2022subexponential}
for independent variables, in contrast with the usual case where the
rate function is given by a Fenchel transform~\cite{ferre2019large}.
Moreover, it suggests that the approximation performed in~\cite{nickelsen2018anomalous}
is actually exact for long times, while corrections in finite time could be estimated
via expansion techniques~\cite{nickelsen2022noise,ferre2023stochastic}.

Although it seems that the result is not new~\cite{bazhba2022large},
we believe the simplicity of the proposed
approach provides a better understanding of heavy-tailed large deviations and will allow to derive
more general results.

\vspace{0.5cm}


\section{Main result}
\label{sec:main}

We consider the one dimensional SDE on~$\R_+$:
\begin{equation}
  \label{eq:X}
  dX_t = - \gamma X_t \,dt + dW_t,\quad  \quad X_0=0,
\end{equation}
where $(W_t)_{t\geq0}$ is a real-valued standard Brownian motion and $\gamma >0$
is a parameter fixed throughout. We investigate
large deviations for the integral
\[
L_T^p = \frac1T\int_0^T f_p(X_t)\,dt,
\]
where $T>0$ and, for $p>0$, we use the scaling function
\begin{equation}
  \forall\,x\in\R,\quad f_p(x) =\mathrm{sign}(x) |x|^p.
\end{equation}
For $0<p\leq2$ we know that~$(L_T^p)_{T> 0}$ obeys  large deviations
asymptotics at exponential scale when $T\to+\infty$
with a smooth, good rate function that can be expressed as the Fenchel transform of a cumulant
generating function~\cite{touchette2009large,dembo2010large}. When $p>2$, the only result known by
the author is shown in~\cite{bazhba2022large}.
We propose in this paper a full LDP proof with simple self-contained arguments inspired
by~\cite{gantert2014large,ferre2022subexponential}.
With this alternative basis relying on standard large deviations techniques, we hope
to build more general results in the future.

Before stating our main result, we introduce $C(I)$ (resp.~$\mathscr{C}(I)$)
the set of continuous  (resp. absolutely continuous) functions on an interval~$I\subset\R$.
When~$I = [0,b]$ or $I=[0,b)$ for $b>0$, we write
$C_x(I)=\{\varphi\in C(I),\ \varphi_0 = x\}$ and similarly for~$\mathscr{C}_x$.
We also introduce the Freidlin--Wentzell functional associated with the diffusion~\eqref{eq:X}
namely, for any~$T>0$:
\begin{equation}
  \label{eq:J}
  \forall\, \varphi \in C([0,T]),\quad
  \mathcal{J}_T(\varphi)=\left\{ \begin{aligned}
    &\frac12
    \int_0^T | \dot\varphi_t + \gamma \varphi_t|^2\,dt, & \ \mbox{if}\ \varphi\in\mathscr{C}_0
    ([0,T]),
    \\ &+\infty, &\quad \mbox{otherwise}.
  \end{aligned}
  \right.
\end{equation}
In all what follows, we define the crucial exponent:
\[\alphap = \frac2p.\]
Note that~$\alphap \in(0,1)$ when $p>2$. Our main result is as follows.
\begin{theorem}
  \label{th:main}
  For $p>2$, $(L_T^p)_{T>0}$ satisfies a large deviations principle in~$\mathbb{R}$
  at speed~$T^{\alphap}$
  and with rate function
  \[
  \forall\,x\in\R,\quad
  I(x) = J_\infty^\star |x|^{\alphap},
  \]
  where
  \[
  J_\infty^\star =\underset{\varphi\in\mathscr{C}_0([0,+\infty))}{ \inf}
  \left\{ \mathcal{J}_\infty(\varphi),\ \mbox{with}\
    \int_0^\infty f_p(\varphi_t)\,dt = 1\right\}.
\]
\end{theorem}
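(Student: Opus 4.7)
The overall strategy is to decompose the diffusion into i.i.d.\ excursions from the origin, extract the Weibullian tail of each excursion's contribution via a small-noise Freidlin--Wentzell rescaling, and then invoke the one-big-jump large deviations principle for heavy-tailed i.i.d.\ sums from~\cite{gantert2014large,ferre2022subexponential}. Concretely, I would introduce regeneration times $\tau_0 = 0$ and $\tau_{k+1} = \inf\{t > \tau_k + 1 : X_t = 0\}$ (the offset prevents degenerate excursions around the recurrent point $0$), set $Y_k = \int_{\tau_{k-1}}^{\tau_k} f_p(X_t)\,dt$ and $N_T = \sup\{k : \tau_k \leq T\}$. By the strong Markov property the pairs $(Y_k, \tau_k - \tau_{k-1})$ for $k \geq 2$ are i.i.d., and $N_T / T \to 1/\mathbb{E}[\tau_2 - \tau_1]$ almost surely, so that $T L_T^p = \sum_{k=1}^{N_T} Y_k$ up to a one-excursion boundary term that will be shown negligible at speed $T^{\alphap}$.

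The key ingredient is the tail of a single $Y_k$. The scaling $X_t = y^{1/p} \Phi_t$ transforms $\{Y_1 > y\}$ into $\{\int_0^{\tau^{\Phi}} f_p(\Phi_t)\,dt > 1\}$, where $\Phi$ solves
\[
d\Phi_t = -\gamma \Phi_t\,dt + y^{-1/p}\,dW_t,\qquad \Phi_0 = 0,
\]
an Ornstein--Uhlenbeck process with small noise $\eps = y^{-1/p}$. The Freidlin--Wentzell sample-path LDP at speed $\eps^{-2} = y^{\alphap}$ with rate functional $\mathcal{J}_\infty$, combined with a passage to an infinite time horizon using compactness of sub-level sets of $\mathcal{J}_\infty$ (which follows from the confining drift), yields
\[
\lim_{y \to +\infty} y^{-\alphap} \log \mathbb{P}(Y_1 > y) = -J_\infty^\star,
\]
together with the symmetric statement for the left tail since $f_p$ is odd and the OU law is invariant under $X \mapsto -X$.

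Equipped with this Weibullian tail, the partial sum $S_{N_T} := \sum_{k=1}^{N_T} Y_k$ falls in the scope of~\cite{gantert2014large,ferre2022subexponential}: the one-big-jump principle asserts that, at the logarithmic level,
\[
\mathbb{P}\bigl( S_{N_T} > T x \bigr) \;\asymp\; N_T \cdot \mathbb{P}(Y_1 > Tx) \;\asymp\; \exp\bigl( -J_\infty^\star |x|^{\alphap} T^{\alphap} + o(T^{\alphap}) \bigr),
\]
since the prefactor $\log N_T = O(\log T)$ is absorbed into the $o(T^{\alphap})$ correction. Dividing by $T$ produces the desired LDP for $L_T^p$ at speed $T^{\alphap}$ with rate $J_\infty^\star |x|^{\alphap}$. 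Exponential tightness at the same speed follows from the Gaussian bound on $\sup_{t \leq T} |X_t|$, which also gives goodness of the rate function.

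The main technical obstacle is the tail asymptotic of step two, because the Freidlin--Wentzell LDP is customarily stated on a fixed time horizon whereas $\tau_1$ is a random return time of unbounded support. For the upper bound, I would truncate at a large deterministic $T_0$, apply the standard finite-horizon FW LDP, and show that excursions longer than $T_0$ contribute super-exponentially little thanks to the drift $-\gamma X$. For the lower bound, I would pick a near-minimizer $\varphi^\ast$ of the infinite-horizon variational problem defining $J_\infty^\star$, supported essentially on $[0,T_0]$, build a small tube around it, and concatenate with a short corrective segment forcing a genuine return to $0$; the Freidlin--Wentzell lower bound then delivers the matching exponential decay. Once this two-sided tail estimate is secured, the remainder of the proof is a direct application of the heavy-tail LDP machinery.
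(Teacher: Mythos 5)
Your overall blueprint for the upper bound coincides with the paper's: regenerative decomposition into i.i.d.\ excursions, a Freidlin--Wentzell small-noise rescaling to obtain the Weibullian tail of one excursion, and then the heavy-tailed i.i.d.\ sum machinery. But there is a genuine gap in the way you close the argument. You invoke \cite{gantert2014large,ferre2022subexponential} as a black box, asserting ``the one-big-jump principle asserts that $\proba(S_{N_T}>Tx)\asymp N_T\,\proba(Y_1>Tx)$.'' Those references treat sums with a \emph{deterministic} number of terms, while here the index $N_T$ is random, correlated with the summands, and the summands themselves are rescaled by $T$. To control this you need a large-deviations estimate on $N_T$ at the subexponential speed $T^{\alphap}$ (not merely the LLN $N_T/T\to 1/\E[\tau]$ that you state): the paper proves via Cram\'er's theorem that $\proba(N_T\notin\mathcal M_{T,\bar\eps})$ is \emph{superexponentially} small at scale $T^{\alphap}$, so that one can replace $N_T$ by a deterministic $M_T$. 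Without that step, the ``one-big-jump'' reduction does not follow. Moreover, for the ``many small contributions'' part the paper does not cite the i.i.d.\ theorems but re-proves the bound for this setting (Tchebychev, Taylor expansion of $\log$, and the integration-by-parts lemma), precisely because the excursion random variables $C_i^T$ depend on $T$; your sketch hides this.

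A second, smaller point: the paper's lower bound does \emph{not} go through excursions at all. It fixes a finite window $[0,H]$, rescales $X_t/T^{1/p}$ to a small-noise OU, applies the Freidlin--Wentzell lower bound directly, lets the constraint window shrink, and finally sends $H\to\infty$ via the approximation argument of Appendix~\ref{sec:cutting}; the rest of the trajectory is handled by a plain ergodicity estimate. This avoids having to prove a matching \emph{lower} bound for the single-excursion tail $\proba(Y_1>y)$, which is what your plan needs and which requires the extra concatenation argument you gesture at (building a near-minimizer and gluing a corrective segment forcing a return to~$0$). Your route can be made to work but costs you an additional lower-bound lemma for the excursion tail and a careful gluing estimate; the paper's direct window argument is simpler and sidesteps the issue. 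If you adopt that direct lower bound, then for the upper bound you only need the one-sided tail estimate (Lemma~\ref{lem:CT}), which is also what the paper establishes.
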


\section{Proof of Theorem~\ref{th:main}}
\label{sec:proof}
We proceed very much in the spirit of~\cite{gantert2014large,ferre2022subexponential}
for the independent case.
For this, instead of isolating a single variable, we extract a finite time
interval of the trajectory. This turns the long time problem into a small temperature
one that we can study through Freidlin--Wentzell asymptotics. To the knowledge
of the author, this technique was introduced
in~\cite{nickelsen2018anomalous} as an approximation and used further in~\cite{bazhba2022large}.

In all what follows we use the applications:
\begin{equation}
\label{eq:FH}
  F_H^p : \varphi \in C^0([0,H]) \mapsto \int_0^H f_p(\varphi_t)\,dt,
\quad \mbox{and}\quad \overline F_H^p : \varphi \in C^0([0,H]) \mapsto \int_0^H |\varphi_t|^p\,dt,
\end{equation}
where $H>0$ is arbitrary. Lemma~\ref{lem:continuous} in Appendix~\ref{sec:instanton}
shows that~$F_H^p$ and~$\overline F_H^p$ are continuous applications
on~$C^0([0,H])$, an important property to manipulate Freidlin--Wentzell asymptotics.

\subsection{Lower bound}
\label{sec:lower}

As we said, our strategy is to isolate a finite fraction of time over which
the fluctuation should realize, and show that this leads to
a low temperature behaviour leading to the asymptotics, similar to what is done
in~\cite{bazhba2022large}.
We recall that it is sufficient to prove the LDP lower bound on an arbitrary open ball.
Let us thus consider $x,\delta,\eps, H>0$ and write
\[
\begin{aligned}
\proba\big(
L_T^p\in (x-\delta,x+\delta)
\big)& =  \proba\left(
x-\delta < \frac1T \int_0^T f_p(X_s)\,ds < x +\delta\right)
\\ & \geq\proba\left(x-\delta-\eps < \frac1T \int_0^H f_p(X_s)\,ds < x +\delta+\eps,
\ -\eps \leq \frac1T \int_H^T f_p(X_s)\,ds \leq \eps\right)
\\ & \geq \proba\left(x-\delta-\eps < \frac1T \int_0^H f_p(X_s)\,ds < x +\delta+\eps\right)
\proba\left(-\eps \leq \frac1T \int_H^T f_p(X_s)\,ds \leq \eps\right).
\end{aligned}
\]
The last probability goes to one when $T\to +\infty$ and $H>0$ is fixed
by a standard ergodicity argument. We thus focus on:
\[
\proba\left(x-\delta-\eps < \frac1T \int_0^H f_p(X_s)\,ds < x +\delta+\eps\right)
= \proba\left(x_- < \int_0^H f_p(X_s^T)\,ds < x_+\right),
\]
where we used the shorthand notation $x_\pm = x \pm(\delta +\eps)$ and
introduced the process~$(X_t^T)_{t\in[0,H]}$ defined by:
\begin{equation}
  \label{eq:Xscaled}
\forall\, t \in[0,H],\quad  X_t^T = \frac{X_t}{T^{\frac1p}}.
\end{equation}
We leave the proof of the following simple lemma to the reader.
\begin{lemma}
  \label{lem:XT}
  The process~$(X_t^T)_{t\in[0,H]}$ satisfies the following SDE on~$[0,H]$:
  \begin{equation}
    \label{eq:XT}
    dX_t^T = - \gamma X_t^T\,dt + \eps_T \sigma\,dW_t,\quad \eps_T = \frac{1}{T^{\frac1p}},
    \quad X_0^T = 0.
  \end{equation}
\end{lemma}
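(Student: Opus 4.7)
The plan is to prove the lemma by a straightforward scaling argument based on the linearity of~\eqref{eq:X}. Since $T>0$ is fixed, the factor $T^{-1/p}$ is a deterministic positive constant and can be moved inside both the Lebesgue and the Itô integral. Concretely, I would start from the integral form $X_t = -\gamma \int_0^t X_s\,ds + W_t$ of the SDE~\eqref{eq:X}, multiply through by $T^{-1/p}$, and use the definition~\eqref{eq:Xscaled} of $X_t^T$ to obtain
\[
X_t^T = -\gamma \int_0^t X_s^T\,ds + \frac{1}{T^{1/p}}\,W_t,
\]
which is the integral form of~\eqref{eq:XT} with $\eps_T = T^{-1/p}$ (and $\sigma = 1$, matching the unit diffusion coefficient in~\eqref{eq:X}). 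The initial condition $X_0^T = X_0/T^{1/p} = 0$ follows immediately from $X_0 = 0$, and restricting $t$ to $[0,H]$ is allowed since the SDE holds globally in time.

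Alternatively, one could invoke Itô's formula applied to the affine map $\Phi(x) = x/T^{1/p}$ composed with $X_t$: because $\Phi''\equiv 0$ no quadratic variation correction appears, and $\Phi'(X_t)\,dX_t = T^{-1/p}\,dX_t$ directly reproduces~\eqref{eq:XT} in differential form. There is no genuine obstacle here; the content of the lemma is a bookkeeping identity whose role is to recast the long-time problem for $X_t$ on $[0,T]$ into a small-noise problem for $X_t^T$ on the fixed window $[0,H]$, so that the Freidlin--Wentzell functional~\eqref{eq:J} (with small parameter $\eps_T$) can be used to compute the asymptotics of $\mathbb{P}(x_- < \int_0^H f_p(X_s^T)\,ds < x_+)$ in the subsequent steps of the proof.
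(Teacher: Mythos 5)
Your proof is correct. The paper explicitly leaves this lemma to the reader, and the direct scaling argument you give (dividing the integral form of~\eqref{eq:X} by the deterministic constant $T^{1/p}$, or equivalently applying Itô's formula to the linear map) is exactly the intended one-line verification; you also rightly note that $\sigma=1$ here, as the extra $\sigma$ in the statement of~\eqref{eq:XT} is just a vestigial factor not present in the original SDE~\eqref{eq:X}.
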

The meaning of this lemma is that, by imposing the fluctuation to take place within
a finite time window~$[0,H]$, we naturally exhibit a small temperature problem.

Since the mapping~$F_H^p$ defined in~\eqref{eq:FH} is continuous, the set
$\{\varphi\in C^0([0,H]),\ x_-< F_H^p(\varphi)<x_+\}$ is open.
By using the Freidlin--Wentzell asymptotics 
Theorem~\ref{th:FW} recalled in Appendix~\ref{sec:instanton}, we thus have
\[
\underset{T\to\infty}{\liminf}
\ \eps_T^2 \log \proba\left(x_- < \int_0^H f_p(X_s^T)\,ds < x_+\right)
\geq - \inf_{\varphi\in \mathscr{C}_0([0,H])}\left\{
\frac12 \int_0^H |\dot \varphi_t + \gamma \varphi_t|^2\,dt,\quad \int_0^H f_p(\varphi_t)\,dt \in (x_-,x_+)
\right\}.
\]
Following the notations in~\eqref{eq:J} and~\eqref{eq:FH} and
dividing the arbitrary variable~$\varphi$
by~$x^{\frac1p}$ in the infimum, we obtain
\begin{equation}
  \label{eq:eqintJ}
\underset{T\to\infty}{\liminf}
\ \eps_T^2 \log \proba\left(x_- < \int_0^H f_p(X_s^T)\,ds < x_+\right)
\geq - x^{\frac2p} \inf_{\varphi\in \mathscr{C}_0([0,H])}\left\{
\mathcal{J}_H(\varphi),\quad F_H^p(\varphi) \in \left(1-\frac{\delta+\eps}{x},1+\frac{\delta+\eps}{x}\right)
\right\}.
\end{equation}
Since~$\mathcal{J}_H$ is a good rate function and~$F_H^p$ is continuous, we can
use~\cite[Chapter~II,~Lemma~2.1.2]{deuschel2001large} to obtain that
\[
\underset{\eps,\delta\to0}{\lim}\inf_{\varphi\in \mathscr{C}_0([0,H])}\left\{
\mathcal{J}_H(\varphi),\quad \int_0^H f_p(\varphi_t)\,dt \in \left(1-\frac{\delta+\eps}{x},1+\frac{\delta+\eps}{x}\right)
\right\}= \inf_{\varphi\in \mathscr{C}_0([0,H])}\left\{
\mathcal{J}_H(\varphi),\quad \int_0^H f_p(\varphi_t)\,dt =1
\right\}.
\]
We finally show in Appendix~\ref{sec:cutting} that
\[
\underset{H\to\infty}{\lim}
\inf_{\varphi\in \mathscr{C}_0([0,H])}\left\{
\mathcal{J}_H(\varphi),\quad \int_0^H f_p(\varphi_t)\,dt=1
\right\} = J_\infty^\star.
\]
As a result,~\eqref{eq:eqintJ} becomes:
\[
\underset{T\to\infty}{\liminf}
\ \eps_T^2 \log \proba\left(x_- < \int_0^H f_p(X_s^T)\,ds < x_+\right)
\geq - x^{\frac2p}J_\infty^\star,
\]
which concludes the proof of the lower bound.

\subsection{Upper bound}
\label{sec:upper}

For the upper bound we again follow the  proof of the independent
variables case~\cite{ferre2022subexponential}, but using excursions
of the process $(X_t)_{t\in[0,T]}$ like in~\cite{bazhba2022large}.
For a fixed~$\eps_0$, we define $\tau_0=0$ and, for $i>1$, we set
\[
\tau_i^{\eps_0} = \inf\{t\geq \tau_{i-1},\ X_t = \eps_0\},\quad
\tau_i = \inf\{t\geq \tau_{i}^{\eps_0},\ X_t = 0\}.
\]
In other words, we first ensure that the process has departed away from~$0$, and then consider its
excursion back to the origin.
These are clearly stopping times adapted to $X_t$, and we can use the decomposition
\[
\frac1T \int_0^T f_p(X_t)\, dt =  \sum_{i=1}^{N_T} \int_{\tau_{i-1}}^{\tau_i} f_p(X_s^T)\,ds + \int_{\tau_{N_T}}^Tf_p(X_s^T)\,ds,
\]
where the random variable $N_T$ is the number of cycles up to time~$T$:
\begin{equation}
  \label{eq:NT}
  N_T = \max\left\{ k\geq 0,\ \sum_{i=1}^k \tau_i \leq T\right\}.
\end{equation}
Since at each cycle the process reaches the origin back, the random variables
\begin{equation}
  \label{eq:CiT}
C_i^T = \int_{\tau_{i-1}}^{\tau_i} f_p(X_s^T)\,ds
\end{equation}
are independent and identically distributed, and we have
\[
\frac1T \int_0^T f_p(X_t)\, dt  =
\sum_{i=1}^{N_T} C_i^T + \widetilde C^T,
\]
where
\[
\widetilde C^T = \int_{\tau_{N_T}}^Tf_p(X_s^T)\,ds
\]
is a remainder that is clearly neglictible with respect to the sum (we leave the proof
of this assertion to the reader and neglect this term bellow).
We thus split the trajectory into its~$N_T$ excursions and we proceed like in
the i.i.d. situation by first considering the case
where one excursion realizes the fluctuation, and the case where the whole trajectory
with no particular excursion does.

In order to do so, we also need to control the number of excursions.
Since $N_T\sim T\ \E[\tau]$ we introduce
$M_T=\lceil T\ \E[\tau]\rceil$ and the familly of open intervals
\[
\forall\, T>0,\ \forall\, \bar \eps >0, \quad \mathcal{M}_{T,\bar \eps}=\left( T \left(\frac{1}{\E[\tau]}- \bar \eps\right),
  T \left(\frac{1}{\E[\tau]}+ \bar \eps\right)\right),
\]
which allows to write
\begin{equation}
  \label{eq:decomproba}
\begin{aligned}
\proba\left(
\frac1T \int_0^T f_p(X_t)\, dt \geq x
\right) & \leq \proba\left(\sum_{i=1}^{N_T} C_i^T\geq x
\right)\leq \proba\left(\sum_{i=1}^{M_T} C_i^T\geq x,\ N_T \in \mathcal{M}_{T,\bar \eps}
\right)+ \proba\left(N_T \notin \mathcal{M}_{T,\bar \eps}\right)
\\ &\leq \proba\left(
\exists \,j \in\{1,\hdots,M_T\},\ C_j^T \geq x
\right)
\\ & + \proba\left(
\forall \,j \in\{1,\hdots,M_T\},\ C_j^T < x \ \mbox{and}
\ \sum_{i=1}^{M_T} C_i^T \geq x
\right)+ \proba\left(N_T \notin \mathcal{M}_{T,\bar \eps}\right).
\end{aligned}
\end{equation}
In what follows, the last probability on the right hand side above can be neglected  since
we show in Appendix~\ref{sec:MT} that
\[
\underset{T\to\infty}{\limsup}\ \frac{1}{T^{\alphap}}
\log \proba(N_T\notin \mathcal M_{T,\bar \eps} )= -\infty.
\]

The behaviour of~$C_1^T$ as $T\to +\infty$ is crucial for treating
the two remaining terms. In Appendix~\ref{sec:CT} we prove the following crucial
subexponential large deviations upper bound for one variable,
which will be used at various places below.

\begin{lemma}
  \label{lem:CT}
  For any $x>0$ it holds
  \begin{equation}
    \underset{T\to\infty}{\limsup}\ \eps_T^2 \log\proba\left(C_1^T \geq x\right)
    \leq - x^{\alphap} J_\infty^\star.
  \end{equation}
\end{lemma}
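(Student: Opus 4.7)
The plan is to dominate $C_1^T$ by a continuous functional of $X^T$ on a fixed horizon $[0,H]$ and apply the Freidlin--Wentzell upper bound (Theorem~\ref{th:FW}), handling the random excursion length $\tau_1$ via the exponential ergodicity of the Ornstein--Uhlenbeck process. Since $f_p(y)\leq|y|^p$ pointwise, on the event $\{\tau_1\leq H\}$ we have $C_1^T\leq\overline F_H^p(X^T)$, and therefore
\[
\proba(C_1^T\geq x)\leq \proba\bigl(\overline F_H^p(X^T)\geq x\bigr)+\proba(\tau_1>H).
\]

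For the first term, continuity of $\overline F_H^p$ on $C([0,H])$ (Lemma~\ref{lem:continuous}) together with the small-noise structure of $X^T$ from Lemma~\ref{lem:XT} allow me to invoke Theorem~\ref{th:FW}, giving
\[
\limsup_{T\to\infty}\eps_T^2\log\proba\bigl(\overline F_H^p(X^T)\geq x\bigr)\leq-\inf_{\varphi\in\mathscr{C}_0([0,H])}\bigl\{\mathcal{J}_H(\varphi)\,:\,\overline F_H^p(\varphi)\geq x\bigr\}.
\]
The scaling $\varphi\mapsto x^{1/p}\varphi$ recasts this infimum as $x^{\alphap}\bar J_H^\star$ with $\bar J_H^\star:=\inf\{\mathcal{J}_H(\varphi):\overline F_H^p(\varphi)\geq1\}$. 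The elementary identity $\mathcal{J}_H(|\varphi|)=\mathcal{J}_H(\varphi)$ (expanding the square and using $\dot{|\varphi|}\,|\varphi|=\dot\varphi\,\varphi$ almost everywhere) combined with $\overline F_H^p(\varphi)=F_H^p(|\varphi|)$ yields $\bar J_H^\star=J_H^\star$.

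For the second term, the mean-reverting drift $-\gamma X$ produces exponential moments of $\tau_1$, and in particular $\proba(\tau_1>H)\leq Ce^{-cH}$ for suitable $C,c>0$. Letting $H=H(T)\to\infty$ at a rate ensuring $cH(T)/T^{\alphap}\to+\infty$ makes this tail superexponentially small at the prescribed scale. Combined with the convergence $J_{H(T)}^\star\to J_\infty^\star$ from Appendix~\ref{sec:cutting}, the claimed bound
\[
\limsup_{T\to\infty}\eps_T^2\log\proba(C_1^T\geq x)\leq-x^{\alphap}J_\infty^\star
\]
then follows.

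The main technical obstacle is that Theorem~\ref{th:FW} is stated on a fixed horizon, whereas the strategy above requires $H=H(T)\to\infty$. This can be handled either by a uniform-in-$H$ Freidlin--Wentzell estimate (available for OU thanks to the additive structure of $\mathcal{J}_H$ and the exponential contraction of the deterministic flow), or by splitting $[0,H(T)]=[0,H_0]\cup[H_0,H(T)]$, applying the fixed-horizon Freidlin--Wentzell on $[0,H_0]$ and controlling the remainder through a concentration estimate using that $|X^T|=O(\eps_T)$ with overwhelming probability in the small-noise regime, so that $\int_{H_0}^{H(T)}|X^T_s|^p\,ds$ contributes negligibly.
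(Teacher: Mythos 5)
Your route differs from the paper's in its opening move, and the difference is worth noting. The paper first splits the excursion at the intermediate level $\eps_0$: it writes $C_1^T$ as the contribution over $[0,\tau_1^{\eps_0}]$ (where $X_s\leq\eps_0$, so $f_p(X_s)\leq\eps_0^p$ and the term is killed by a Tchebychev bound on the exponential moment of $\tau_1^{\eps_0}$) plus the contribution over $[\tau_1^{\eps_0},\tau_1]$, which by the strong Markov property is an excursion started from $\eps_0$; it then applies Freidlin--Wentzell to this second piece with initial condition $\varphi_0=\eps_0$ and closes by sending $\eps_0\to0$. You skip this two-stage decomposition entirely: since $f_p\leq|\cdot|^p$ pointwise, you dominate $C_1^T$ by $\overline F_H^p(X^T)$ on $\{\tau_1\leq H\}$ and apply Theorem~\ref{th:FW} to the closed set $\{\overline F_H^p\geq x\}$ with initial condition $0$ directly. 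This is genuinely simpler: it removes the auxiliary level $\eps_0$, removes the limit $\eps_0\to0$, and removes the need to compare variational problems with initial conditions $\eps_0$ and $0$. Your reflection identity $\mathcal{J}_H(|\varphi|)=\mathcal{J}_H(\varphi)$ giving $\bar J_H^\star=J_H^\star$ is also a cleaner replacement for the paper's Lemma~\ref{lem:FW}, which argues by truncating a (quasi-)minimizer at zero.

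Where your proof stalls is exactly where the paper's proof is weakest. After the union bound you are left with $\proba(\tau_1>H)$, and the paper likewise ends up with $\proba(\tilde\tau>H)$ and claims that ``since $H$ is fixed, we can neglect'' this term. For fixed $H$ that term is a $T$-independent positive constant, so $\eps_T^2\log\proba(\tau_1>H)\to0$, and a bound of the form $\max(-x^{\alphap}\bar J_H^\star,\,0)=0$ is vacuous; the remedy requires $H=H(T)\to\infty$, which sits outside the fixed-horizon hypotheses of Theorem~\ref{th:FW}. You have correctly identified this as the main technical obstacle — it is genuinely there, and the paper's dismissal does not resolve it either. However, your two proposed patches remain sketches: a ``uniform-in-$H$ Freidlin--Wentzell estimate'' for OU is plausible but is not a statement you can cite, and the splitting $[0,H_0]\cup[H_0,H(T)]$ needs an actual tail estimate on $\int_{H_0}^{H(T)}|X_s^T|^p\,ds$ at the scale $\exp(-cT^{\alphap})$, which does not follow merely from $|X_s^T|=O(\eps_T)$ ``with overwhelming probability.'' So the verdict is: a cleaner decomposition than the paper, the same structural obstacle, honestly flagged but not yet closed.
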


We next study the two cases that arise, in a fashion
similar to~\cite{gantert2014large,ferre2022subexponential}.

\subsection*{The heavy tail component}
By the union's bound we have
\[
\proba\left(
\exists \,j \in\{1,\hdots,M_T\},\ C_j^T \geq x
\right) \leq M_T \proba\left( C_1^T \geq x
\right).
\]
Since $M_T=\lceil T\ \E[\tau]\rceil$ with~$\Gamma>0$ and $\eps_T = T^{-\frac1p}$, we have
$\eps_T^2 \log \left(M_T\right)\xrightarrow[]{T\to\infty} 0$. Therefore, 
Lemma~\ref{lem:CT} above implies that
\[
\begin{aligned}
\underset{T\to\infty}{\limsup}
\ \eps_T^2 \log\proba\left(
\exists \,j \in\{1,\hdots,M\},\ C_j^T \geq x
\right) &
\leq \underset{T\to\infty}{\limsup}\left[ \eps_T^2 \log\proba\left(
  C_1^T \geq x\right) 
+\eps_T^2 \log \left(M_T\right)\right)
\\ & \leq - x^{\alphap} J_\infty^\star,
\end{aligned}
\]
which is the expected result for the first probability on the last line of~\eqref{eq:decomproba}.

\subsection*{Light tail part}
Let's turn to the second term, which encompasses the lighter tail part of the
asymptotics. As opposed to the direct analytical method used in~\cite{bazhba2022large},
we rely on the couple of arguments used in~\cite{gantert2014large,ferre2022subexponential}.
First, for any~$\eta_T>0$ we use Tchebychev's inequality:
\[
\begin{aligned}
\underbrace{ \proba\left(
\forall \,j \in\{1,\hdots,M_T\},\ C_j^T < x \ \mbox{and}\
\ \sum_{i=1}^{M_T} C_i^T \geq x
\right)}_{(P)} & \leq  \e^{-\eta_T x} \E\left[
  \ind_{\left\{\forall\,j\in\{1,\hdots,M_T\},\, C_j^T<x\right\} }
\e^{\eta_T\sum_{i=1}^{M_T} C_i^T}
\right]\\ &
\leq \e^{-\eta_T x} \E\left[
  \ind_{\left\{ C_1^T<x\right\} }
\e^{\eta_T C_1^T}\right]^{M_T}.
\end{aligned}
\]
where we used independence of the~$C_i$'s for moving to the second line.
It is natural to set~$\eta_T = \theta \eps_T^{-2}$
for some $\theta>0$. Moreover, recalling
that $X_t^T = X_t/ T^{\frac1p}$, $C_1^T = C_1/T$,
and $\eps_T=T^{-\frac1p}$ (since $M_T = \lceil T/\E[\tau]\rceil$
we introduce $\xi_T = M_T/T\xrightarrow[]{T\to\infty} 1$) we obtain
\[
\eps_T^2\log (P) \leq - \theta x +\xi_T T^{1 - \alphap} \log\left( \E\left[
  \ind_{\left\{  C_1<Tx\right\} }
\e^{\theta T^{\alphap-1}C_1}
  \right] \right).
\]
In order to conclude we will prove the following lemma.
\begin{lemma}
  \label{lem:negative}
  For any  $\theta <x^{\alphap-1} J_\infty^\star$ it holds
  \[
\underset{T\to\infty}{\limsup}\ T^{1-\alphap} \log \E\left[
  \ind_{\left\{  C_1<Tx\right\} }
\e^{\theta T^{\alphap-1}C_1}
  \right]\leq 0. 
  \]
\end{lemma}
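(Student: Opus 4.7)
The central observation is that $\mathbb{E}[C_1]=0$. Indeed, the ergodic theorem gives $\frac{1}{T}\int_0^T f_p(X_s)\,ds \to \mathbb{E}_\pi[f_p] = 0$ almost surely, since the invariant law $\pi = \mathcal{N}(0,1/(2\gamma))$ is symmetric and $f_p$ is odd, while the regenerative decomposition yields $\sum_i C_i^T \to \mathbb{E}[C_1]/\mathbb{E}[\tau]$; identification forces $\mathbb{E}[C_1]=0$. This centering is crucial: without it, the linear Taylor term of the exponential would produce a non-vanishing contribution of order $T^{\alphap-1}$ that would spoil the bound.

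I would introduce an intermediate cutoff $R_T \to \infty$ with $R_T = o(T^{1-\alphap})$, for instance $R_T = T^{(1-\alphap)/2}$, and decompose
\[
Z_T := \mathbb{E}\bigl[\ind_{\{C_1<Tx\}}\,e^{\theta T^{\alphap-1}C_1}\bigr] = Z_T^{\mathrm{b}} + Z_T^{+} + Z_T^{-},
\]
corresponding respectively to the events $\{|C_1|\le R_T\}$, $\{R_T<C_1<Tx\}$ and $\{C_1<-R_T\}$. On the bounded piece the argument $\theta T^{\alphap-1}C_1$ is uniformly small, and a second-order Taylor expansion combined with $\mathbb{E}[C_1]=0$ and the super-polynomial smallness of $\mathbb{E}[C_1\,\ind_{|C_1|>R_T}]$ (via the stretched-exponential tail of $C_1$ provided by Lemma~\ref{lem:CT}) yields $Z_T^{\mathrm{b}} = 1 + O(T^{2(\alphap-1)}) + o(T^{\alphap-1}) = 1 + o(T^{\alphap-1})$. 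The piece $Z_T^{-}$ is super-polynomially small because the integrand is bounded by $1$ and $\mathbb{P}(C_1<-R_T)$ decays at stretched-exponential rate in $R_T$.

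The main task is $Z_T^{+}$. Riemann--Stieltjes integration by parts gives, up to a negligible boundary term,
\[
Z_T^{+} \le \theta T^{\alphap-1}\int_{R_T}^{Tx} e^{\theta T^{\alphap-1}u}\,\mathbb{P}(C_1>u)\,du,
\]
and the substitution $u = Tv$ together with the tail bound of Lemma~\ref{lem:CT} reduces this to $\theta T^{\alphap}\int_{R_T/T}^{x} e^{T^{\alphap}h(v)}\,dv$ with $h(v) = \theta v - (1-\eta)J_\infty^\star v^{\alphap}$. Under the hypothesis $\theta < x^{\alphap-1}J_\infty^\star$, choosing $\eta$ small makes $h \le 0$ on $[0,x]$ with its only zero at $v=0$; away from $0$ the integrand is exponentially small in $T^{\alphap}$, while near $0$ the change of variables $w = Tv\bigl((1-\eta)J_\infty^\star\bigr)^{1/\alphap}$ rescales the lower limit to $w_T \propto R_T \to \infty$, leaving the super-polynomially small incomplete-Gamma-type tail $\int_{w_T}^\infty e^{-w^{\alphap}}\,dw$. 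Multiplying by $\theta T^{\alphap}$ gives $Z_T^{+} = o(T^{\alphap-1})$, and combining the three pieces yields $\log Z_T = o(T^{\alphap-1})$, i.e.\ $T^{1-\alphap}\log Z_T \to 0$. The main obstacle is the delicate balance in the choice of $R_T$ (the Taylor remainder requires $R_T = o(T^{1-\alphap})$ while the Laplace-type cutoff requires $R_T \to \infty$), together with extracting from Lemma~\ref{lem:CT} a version of the tail bound uniform in $v \in [R_T/T, x]$.
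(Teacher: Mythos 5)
Your proposal is correct and arrives at the conclusion by a genuinely different route than the paper, although both proofs rest on the same two pillars: the centering $\E[C_1]=0$ (which kills the linear contribution) and the tail bound of Lemma~\ref{lem:CT}. The paper's argument uses $\log y\leq y-1$ followed by a finite Taylor expansion of $\e^y-1$ up to order $k$, isolates a remainder $R_T$, and controls it via H\"older's inequality: one factor carries the polynomial moments of $C_1$ and is forced to vanish by picking $k>\alphap/(1-\alphap)$, while the other factor (of the form $T^{-\alphap}\E[\ind\,\e^{q\theta T^{\alphap-1}C_1}]^{1/q}$) is merely shown to be bounded using integration by parts and Lemma~\ref{lem:CT}. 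Your decomposition replaces the $k$-term Taylor expansion plus H\"older with a three-way truncation $\{|C_1|\le R_T\}$, $\{R_T<C_1<Tx\}$, $\{C_1<-R_T\}$ at a single intermediate scale $R_T=o(T^{1-\alphap})$, and replaces the ``bounded second factor'' step by a genuine Laplace-type estimate on $Z_T^+$ showing it is $o(T^{\alphap-1})$. The outcome is equivalent, but your method avoids the auxiliary choice of the integer $k$ and of H\"older exponents, and makes more visible that the positive tail contribution decays like an incomplete Gamma function of the cutoff.

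Two small points to tighten. First, you invoke ``the stretched-exponential tail of $C_1$ provided by Lemma~\ref{lem:CT}'' to control $\proba(|C_1|>R_T)$ and $\E[C_1\ind_{\{|C_1|>R_T\}}]$; Lemma~\ref{lem:CT} only controls the positive tail $\proba(C_1>u)$, and says nothing directly about $\proba(C_1<-u)$. This is harmless, since finite moments of $C_1$ of all orders (which the paper establishes) combined with Markov's inequality already give $\proba(|C_1|>R_T)=O(R_T^{-j})$ for every $j$, which is $o(T^{\alphap-1})$ once $j$ is large enough; you do not need the stretched-exponential rate for the bounded and negative pieces. Second, the ``uniform in $v$'' worry you flag at the end is actually mild: Lemma~\ref{lem:CT} with $x=1$ (equivalently, $\limsup_{u\to\infty}u^{-\alphap}\log\proba(C_1>u)\le-J_\infty^\star$) already gives, for any $\eta>0$, a threshold $u_0$ such that $\proba(C_1>u)\le\e^{-(1-\eta)J_\infty^\star u^{\alphap}}$ for all $u\ge u_0$; substituting $u=Tv$ this holds uniformly over $v\ge R_T/T$ once $T$ is large enough that $R_T\ge u_0$, so no separate uniformity argument is required.
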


We will closely follow the techniques
of~\cite{gantert2014large}, adapting a couple of arguments along~\cite{ferre2022subexponential}
with Freidlin--Wentzell asymptotics. For this, let us recall that 
$\log y \leq y - 1$ for $y>0$ and, for any integer~$k$, we have
$\e^y - 1 \leq y +y^2/2 +\hdots+\e^y y^{k+1}/(k+1)!$.
Therefore, for an arbitrary $k\in \N^*$,
\begin{equation}
  \label{eq:expand}
T^{1-\alphap}\log 
\E\left[
  \ind_{\left\{  C_1<Tx\right\} }
\e^{\theta T^{\alphap-1}C_1}
\right] \leq T^{1-\alphap} \sum_{j=1}^k\E\left[\ind_{\left\{  C_1<Tx\right\} }
\frac{(\theta T^{\alphap-1}C_1)^j}{j!}  \right] + \frac{R_T}{(k+1)!},
\end{equation}
where
\[
R_T = \theta^{k+1} T^{1-\alphap + (k+1)(\alphap - 1)}\E\left[
  \ind_{\left\{  C_1<Tx\right\} }
C_1^{k+1}\e^{\theta T^{\alphap-1}C_1}
\right].
\]
Let us first consider the sum on the right hand side of~\eqref{eq:expand}.
For $j=1$, we have $\E[C_1]=0$ so the first term is zero.
Then, for $j>1$ we can show that $\E[|C_1|^j]<+\infty$ (as powers and
integrals of Gaussian distributions). Therefore, for any $j> 1$, it holds
\[
T^{1-\alphap} \left|\E\left[\ind_{\left\{  C_1<Tx\right\} }
\frac{(\theta T^{\alphap-1}C_1)^j}{j!}  \right]\right|\leq
\frac{T^{(\alphap - 1)(j-1)}\theta^j}{j!} \E\left[ |C_1|^j  \right]
\xrightarrow[T\to+\infty]{}0.
\]
As a result, the sum in~\eqref{eq:expand} is asymptotically bounded by~$0$
as $T\to+\infty$, so it is negligible at subexponential scale.

Let us turn to the remainder~$R_T$ by using Holder's inequality,
\begin{equation}
  \label{eq:RTdecomp}
R_T \leq \left(\theta^{k+1} T^{ 1 +(k+1)(\alphap - 1)}\E\Big[
  \ind_{\left\{  C_1<Tx\right\} }
|C_1|^{(k+1)r}
\Big]^{\frac1r}\right)\left( T^{-\alphap} \E\left[
  \ind_{\left\{  C_1<Tx\right\} }
\e^{q\theta T^{\alphap-1}C_1}
\right]^{\frac1q}\right),
\end{equation}
where $r,q>1$ are arbitrary real numbers satisfying $1/r + 1/q = 1$.
If we choose $k$ large enough such that
\[
k > \frac{\alphap}{1-\alphap},
\]
then it holds
\[
1 + (k+1)(\alphap - 1)< 0.
\]
Since $\E\left[|C_1|^{(k+1)r}\right]<+\infty$ for any choice of~$k,r>0$,
the first term in the right hand side of~\eqref{eq:RTdecomp} thus goes to zero when
$T\to+\infty$.

Let us now show that the second term on the right hand side of~\eqref{eq:RTdecomp} satisfies,
for some $q>1$:
\begin{equation}
  \label{eq:RTbound}
\underset{T\to\infty}{\limsup}\ T^{-\alphap} \E\left[
  \ind_{\left\{  C_1<Tx\right\} }
\e^{q\theta T^{\alphap-1}C_1}
\right]^{\frac1q} < +\infty.
\end{equation}
For this, we use the integration by part formula~\cite[Lemma~4.5]{gantert2014large} to
obtain that
\[
T^{-\alphap}\E\left[
  \ind_{\left\{  C_1<Tx\right\} }
\e^{q\theta T^{\alphap-1}C_1}
\right]\leq q \theta T^{ - 1}\int_0^{Tx} \e^{q\theta T^{\alphap-1}z}\,\proba(C_1\geq z)\,dz.
\]
We use the change of variable $Txy=z$ to obtain (recall that $C_1^T = C_1/T$):
\[
q \theta T^{ - 1}\int_0^{Tx} \e^{q\theta T^{\alphap-1}z}\,\proba(C_1\geq z)\,dz
= q \theta x\int_0^{1} \e^{q\theta T^{\alphap}xy}\,\proba(C_1^T\geq xy)\,dy.
\]
In order to conclude the proof, we show that the term within the integral on the right
hand side is bounded for any $y\in[0,1]$. Indeed, let $y\in[0,1]$ and write
\[
\frac{1}{T^\alphap}\log\left[\e^{q\theta T^{\alphap}xy}\,\proba(C_1^T\geq xy)\right]
= q\theta xy + \frac{1}{T^\alphap}\log\proba(C_1^T\geq xy).
\]
Since any sequence is bounded by its superior limit, we can leverage
Lemma~\ref{lem:CT} to obtain
\begin{equation}
  \label{eq:limsupb}
\frac{1}{T^\alphap}\log\left[\e^{q\theta T^{\alphap}xy}\,\proba(C_1^T\geq xy)\right]
\leq 
  q\theta xy +\underset{T\to+\infty}{\limsup} \ \frac{1}{T^\alphap}\log\proba(C_1^T\geq xy)
\leq q\theta xy - (yx)^\alphap J_\infty^\star.
\end{equation}
Since~$\theta< x^{\alphap-1} J_\infty^\star$ and $q>1$ are arbitrary, we can let~$\eps_\theta>0$
and set
\[
\theta = (1-\eps_\theta)^2x^{\alphap-1} J_\infty^\star, \quad
q = \frac{1}{1-\eps_\theta}.
\]
This leads to
\[
q\theta xy - (yx)^\alphap J_\infty^\star = x^\alphap J_\infty^\star \left(
(1-\eps_\theta)y - y^\alpha
\right).
\]
Since $\alphap<1$, we have
$(1-\eps_\theta)y - y^\alphap\leq 0$ for any $y\in[0,1]$, so~\eqref{eq:limsupb}
ensures that 
\[
\forall\,y\in[0,1], \quad \e^{q\theta T^{\alphap}xy}\,\proba(C_1^T\geq xy)\leq 1.
\]
This provides the bound~\eqref{eq:RTbound}, so Lemma~\ref{lem:negative}
holds and the proof of the upper bound is complete.


\section*{Appendix}
\appendix

\section{Freidlin--Wentzell asymptotics and the rate function}
\label{sec:instanton}

In this section, we recall the Freidlin--Wentzell low temperature asymptotics
and prove a couple of results on the associated rate function. 
For this we consider the small temperature equivalent of~\eqref{eq:X} that arises
in Lemma~\ref{lem:XT}, for a fixed $H>0$:
\begin{equation}
  \label{eq:Xeps}
    dX_t^\eps = - \gamma X_t^\eps \,dt +\eps dW_t,\quad X_0^\eps=0,\quad t\in[0,H],
\end{equation}
for any $\eps>0$. When the parameter~$\eps$ becomes small, the following result
holds~\cite[Chapter~IV, Th.~1.1]{freidlin1998random}.
\begin{theorem}
\label{th:FW}
  Let $H>0$, then for any measurable set $A\subset C^0([0,H])$:
\begin{equation}
  \label{eq:FWasymp}
  \inf_{\varphi \in  \mathring{A}}\ \mathcal{J}_H(\varphi) \leq
  \underset{\eps\to 0}{\liminf} \ \eps^2 \log \proba\big( (X_t^\eps)_{t\in[0,H]}\in A\big)\leq
\underset{\eps\to 0}{\limsup} \ \eps^2 \log \proba\big( (X_t^\eps)_{t\in[0,H]}\in A\big)
\leq \inf_{\varphi \in  \overline{A}}\ \mathcal{J}_H(\varphi),
\end{equation}
where~$\mathring{A}$ and~$\overline{A}$ denote respectively the interior and closure
of~$A$ for the $C^0$-topology.
\end{theorem}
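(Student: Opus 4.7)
The plan is to exploit the linearity of the drift in~\eqref{eq:Xeps}: because the equation is linear, $X^\eps$ is a continuous linear image of the driving Brownian motion, so the LDP in~\eqref{eq:FWasymp} reduces to Schilder's theorem via the contraction principle, bypassing the usual Freidlin--Wentzell machinery (discretization, exponential tightness, Cameron--Martin shifts).

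Concretely, the explicit solution together with an integration by parts yields
\[
X_t^\eps = \eps \int_0^t \e^{-\gamma(t-s)}\,dW_s = \eps\,\Phi(W)_t,\qquad\text{where}\qquad
\Phi(w)_t = w_t - \gamma\int_0^t \e^{-\gamma(t-s)} w_s\,ds.
\]
The map $\Phi:C^0([0,H])\to C^0([0,H])$ is bounded linear with operator norm at most $1+\gamma H$, hence continuous for the uniform topology. Schilder's theorem asserts that $(\eps W_t)_{t\in[0,H]}$ satisfies an LDP at speed $\eps^2$ with good rate function $\mathcal{I}(w)=\frac12\int_0^H |\dot w_t|^2\,dt$ on $\mathscr{C}_0([0,H])$ and $+\infty$ elsewhere. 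Applying the contraction principle to the continuous map $\Phi$ then gives the upper and lower bounds in~\eqref{eq:FWasymp} with rate
\[
\widetilde{\mathcal{J}}_H(\varphi)=\inf\big\{\mathcal{I}(w)\,:\,\Phi(w)=\varphi\big\}.
\]

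The last step is to identify $\widetilde{\mathcal{J}}_H$ with $\mathcal{J}_H$ of~\eqref{eq:J}. Given $\varphi=\Phi(w)$, introduce $u(t)=\int_0^t \e^{-\gamma(t-s)} w_s\,ds$; then $\dot u = w - \gamma u$ and $\varphi = w - \gamma u = \dot u$, so that $u(t)=\int_0^t \varphi_s\,ds$ and $w_t=\varphi_t+\gamma\int_0^t \varphi_s\,ds$. Thus $\Phi$ is a bijection from $\mathscr{C}_0([0,H])$ onto itself with $\dot w_t = \dot\varphi_t + \gamma \varphi_t$, the infimum is attained at this unique preimage, and
\[
\widetilde{\mathcal{J}}_H(\varphi) = \frac12 \int_0^H |\dot\varphi_t+\gamma\varphi_t|^2\,dt = \mathcal{J}_H(\varphi).
\]
For $\varphi \notin \mathscr{C}_0([0,H])$ no preimage lies in the effective domain of $\mathcal{I}$, so both sides equal $+\infty$.

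The main obstacle is really only bookkeeping: checking that Schilder's theorem is available in the sup-norm topology on $C^0([0,H])$ used here, which is standard, and verifying that the contraction principle preserves goodness of the rate function. Since the level sets of $\mathcal{I}$ are compact in $C^0([0,H])$ and $\Phi$ is a continuous bijection between the effective domains, $\mathcal{J}_H$ is automatically good. In short, the linearity of the drift collapses what would otherwise be a technical Freidlin--Wentzell proof into a straightforward contraction argument on top of Schilder.
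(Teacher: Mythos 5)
Your proof is correct, and it takes a genuinely different route from the paper: the paper does not prove Theorem~\ref{th:FW} at all, but simply invokes it as a classical Freidlin--Wentzell result, citing~\cite[Chapter~IV, Th.~1.1]{freidlin1998random}. You instead give a short self-contained argument tailored to the linear drift: the explicit solution $X_t^\eps = \eps\int_0^t \e^{-\gamma(t-s)}\,dW_s$ is, after the pathwise integration by parts, the image of the scaled Brownian path under the bounded linear map $\Phi(w)_t = w_t - \gamma\int_0^t\e^{-\gamma(t-s)}w_s\,ds$, so the LDP follows from Schilder's theorem and the contraction principle rather than the general small-noise machinery. Your identification of the contracted rate with $\mathcal{J}_H$ is clean (since $\Phi$ is a linear bijection of $C^0([0,H])$ with $\Phi^{-1}(\varphi)_t = \varphi_t+\gamma\int_0^t\varphi_s\,ds$, the infimum in the contraction formula is over a single preimage, and the preimage is absolutely continuous precisely when $\varphi$ is, so $\widetilde{\mathcal{J}}_H=\mathcal{J}_H$ including on the infinite set). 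What each approach buys: the paper's citation keeps the exposition short and covers nonlinear drifts as well, which the paper doesn't need here; your proof is elementary, reveals that no genuine Freidlin--Wentzell technology is required in the Ornstein--Uhlenbeck setting, and fits well with the paper's stated goal of giving simple self-contained arguments. One very minor point: the paper's display~\eqref{eq:FWasymp} has $\eps\log\proba$ where it should read $\eps^2\log\proba$ (the speed used everywhere else in the paper); your argument correctly produces $\eps^2$, so you have implicitly corrected that typo.
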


We now prove a couple of useful results on the rate function and the associated
optimization problem under constraint. We start with a continuity result on the constraint
function.
\begin{lemma}
  \label{lem:continuous}
  For any~$H,p>0$, the applications~$F_H^p$ and~$\overline F_H^p$ defined
  in~\eqref{eq:FH} are continuous on~$C^0([0,H])$.
\end{lemma}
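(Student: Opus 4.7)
The plan is to exploit that convergence in $C^0([0,H])$ (equipped with the uniform topology) implies uniform boundedness of the sequence, which in turn lets us invoke uniform continuity of $f_p$ and $|\cdot|^p$ on a bounded interval.

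More precisely, I would take a sequence $(\varphi_n)_{n\geq 1}$ in $C^0([0,H])$ converging uniformly to some $\varphi \in C^0([0,H])$. First, I observe that $M := \sup_n \|\varphi_n\|_\infty \vee \|\varphi\|_\infty < +\infty$. Since $f_p$ and $x \mapsto |x|^p$ are continuous on $\R$, they are uniformly continuous on the compact set $[-M,M]$; denote by $\omega_p$ (resp.\ $\bar\omega_p$) a modulus of continuity of $f_p$ (resp.\ $|\cdot|^p$) on $[-M,M]$, so that $\omega_p(\delta) \to 0$ and $\bar\omega_p(\delta) \to 0$ as $\delta \to 0$.

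Then, for every $t \in [0,H]$, I have $|f_p(\varphi_n(t)) - f_p(\varphi(t))| \leq \omega_p(\|\varphi_n - \varphi\|_\infty)$, and similarly for $|\cdot|^p$. Integrating over $[0,H]$ yields
\[
|F_H^p(\varphi_n) - F_H^p(\varphi)| \leq H\, \omega_p\bigl(\|\varphi_n - \varphi\|_\infty\bigr),
\qquad
|\overline F_H^p(\varphi_n) - \overline F_H^p(\varphi)| \leq H\, \bar\omega_p\bigl(\|\varphi_n - \varphi\|_\infty\bigr),
\]
and both right-hand sides vanish as $n \to \infty$. This gives sequential continuity, which is equivalent to continuity in the metric space $(C^0([0,H]), \|\cdot\|_\infty)$, concluding the proof.

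There is no real obstacle here: the only point requiring care is the passage from pointwise continuity of $f_p$ (which is genuinely only continuous, not differentiable at $0$ when $p < 1$, and unbounded on $\R$) to the uniform bound needed for the integral estimate. This is handled precisely by the uniform boundedness of convergent sequences, which confines all arguments to a compact interval where continuity upgrades to uniform continuity.
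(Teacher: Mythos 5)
Your proof is correct and follows essentially the same route as the paper's: both reduce to uniform continuity of $f_p$ and $|\cdot|^p$ on a compact interval containing all values of the sequence and its limit, then integrate the pointwise bound over $[0,H]$. The only cosmetic difference is that you package the estimate through an explicit modulus of continuity, whereas the paper works with an $\eps$--$\delta$ formulation; the content is identical.
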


\begin{proof}
  Consider a sequence~$(\varphi^n)_{n\in\N}$ in~$C^0([0,H])$ such
  that $\varphi^n\xrightarrow[n\to +\infty]{C^0}\varphi$
  for some~$\varphi\in C^0([0,H])$. We want to prove
  that~$F_H^p(\varphi^n)\xrightarrow[n\to +\infty]{} F_H^p(\varphi)$.
  Since $\varphi^n\xrightarrow[n\to +\infty]{C^0}\varphi$, for any~$\eps>0$
  there exists~$n_\eps$ such that, for all $n\geq n_\eps$,
  \begin{equation}
    \label{eq:continuity}
\sup_{t\in[0,H]}|\varphi_t^n - \varphi_t|\leq\eps.
  \end{equation}
  In particular, for $n\geq n_\eps$, it holds
  \[
  \forall\, t \in[0,H], \quad
  \underline \varphi - \eps\leq
\varphi_t^n
\leq\overline \varphi + \eps,
\]
where $\underline \varphi = \inf_{[0,H]}\varphi$ and $\overline \varphi
=  \sup_{[0,H]}\varphi$.
Now, the function~$f_p$ is continuous on~$\R$, so it is uniformly continuous
on $[\underline \varphi - \eps, \overline \varphi + \eps]$. For any~$\eps'>0$,
there is~$\delta_{\eps'} >0$ such that
\begin{equation}
  \label{eq:unifcont}
\forall \, (x,y)\in [\underline \varphi - \eps, \overline \varphi + \eps]^2
\quad \mbox{s.t.}\quad |x - y|<\delta_{\eps'},\quad
|f_p(x) - f_p(y)|\leq \eps'.
\end{equation}

Finally, we fix~$\eps'$, so there is~$\delta_{\eps'}>0$ such that~\eqref{eq:unifcont}
holds. By~\eqref{eq:continuity}, there is $n_{\delta_{\eps'}}$ such
that for $n\geq n_{\delta_{\eps'}}$ it holds that
$(\varphi_t^n, \varphi_t)\in [\underline \varphi - \eps, \overline \varphi + \eps]^2$
and by~\eqref{eq:continuity}, we obtain that
\[
\forall\, t \in[0,H],\quad
|\varphi_t^n - \varphi_t|\leq\delta_{\eps'}.
\]
In view of~\eqref{eq:unifcont} we also have
\[
\forall\, t \in[0,H],\quad
|f_p(\varphi_t^n) - f_p(\varphi_t)|\leq \eps'.
\]
Combining these elements, for $n\geq n_{\delta_{\eps'}}$ it holds
\[
|F_H^p (\varphi^n) - F_H^p(\varphi)|
\leq \int_0^H | f_p(\varphi_t^n) - f_p(\varphi_t)|\,dt
\leq H \eps'.
\]
This shows convergence and thus continuity of~$F_H^p$. A similar reasoning
holds for~$\overline F_H^p$.
    
\end{proof}

We now prove that the variational problem associated with the rate function can be
equivalently considered with~$F_H^p$ or~$\overline F_H^p$.
\begin{lemma}
  \label{lem:FW}
  Consider~$x,H>0$ and let~$\mathcal{J}_H$ be defined
  in~\eqref{eq:J}. Let us write
  \[
  J_H^\star(x)=\inf_{\varphi\in\mathscr{C}_0([0,H])}\left\{  \mathcal{J}_H(\varphi),\quad
  \int_0^H f_p(\varphi_t)\,dt \geq x \right\}.
  \]
and
 \[
\overline J_H^\star(x)=\inf_{\varphi\in\mathscr{C}_0([0,H])}\left\{  \mathcal{J}_H(\varphi),
 \quad \int_0^H |\varphi_t|^p\,dt \geq x \right\}.
 \]
 Then, for any $x>0$ it holds $J_H^\star(x) = \overline J_H^\star(x)$.
\end{lemma}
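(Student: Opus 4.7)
The plan is to show the two inequalities $\overline J_H^\star(x)\leq J_H^\star(x)$ and $J_H^\star(x)\leq \overline J_H^\star(x)$ separately. The first one is a straightforward comparison of feasible sets, while the second one rests on the observation that replacing $\varphi$ by $|\varphi|$ does not increase the Freidlin--Wentzell action and automatically turns a constraint involving $|\varphi|^p$ into the same constraint involving $f_p(|\varphi|)=|\varphi|^p$.

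For the first inequality, I would simply note that since $f_p(y)=\mathrm{sign}(y)|y|^p\leq |y|^p$ for every $y\in\R$, any $\varphi\in\mathscr{C}_0([0,H])$ with $\int_0^H f_p(\varphi_t)\,dt\geq x$ also satisfies $\int_0^H |\varphi_t|^p\,dt\geq x$. The feasible set defining $J_H^\star(x)$ is thus contained in the one defining $\overline J_H^\star(x)$, which gives $\overline J_H^\star(x)\leq J_H^\star(x)$.

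For the reverse inequality, fix any $\varphi\in\mathscr{C}_0([0,H])$ with $\int_0^H|\varphi_t|^p\,dt\geq x$ and consider $\tilde\varphi_t=|\varphi_t|$. Then $\tilde\varphi\in\mathscr{C}_0([0,H])$ because the absolute value is $1$-Lipschitz and thus preserves absolute continuity, and $\tilde\varphi_0=0$. Moreover $f_p(\tilde\varphi_t)=|\varphi_t|^p$, so the constraint for $J_H^\star(x)$ is satisfied. The key step is to check that
\[
\mathcal{J}_H(\tilde\varphi)=\mathcal{J}_H(\varphi).
\]
For this I would use the a.e. chain rule for absolutely continuous functions: $\dot{\tilde\varphi}_t=\mathrm{sign}(\varphi_t)\dot\varphi_t$ a.e., where one sets $\mathrm{sign}(0)=0$ and uses that $\dot\varphi_t=0$ a.e. on $\{\varphi=0\}$ (a standard fact for AC functions, since $\varphi$ is differentiable with zero derivative a.e. on any level set of positive measure). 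Hence
\[
\dot{\tilde\varphi}_t+\gamma\tilde\varphi_t=\mathrm{sign}(\varphi_t)\bigl(\dot\varphi_t+\gamma\varphi_t\bigr)\quad \text{a.e. on }\{\varphi\neq 0\},
\]
and both sides vanish a.e. on $\{\varphi=0\}$. Squaring and integrating yields the desired equality of actions. Taking the infimum over $\varphi$ then gives $J_H^\star(x)\leq\overline J_H^\star(x)$.

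The only nontrivial point is the chain-rule identity on the set $\{\varphi=0\}$, which I expect to be the main obstacle to writing a fully rigorous proof. Once this point is dealt with via the classical fact that the distributional derivative of an AC function vanishes a.e. on any level set of positive Lebesgue measure, combining the two inequalities yields $J_H^\star(x)=\overline J_H^\star(x)$, as claimed.
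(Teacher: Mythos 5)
Your proof is correct, and it takes a genuinely different and arguably cleaner route than the paper. The paper argues by contradiction that a (quasi-)minimizer $\varphi^\star$ of $J_H^\star(x)$ must be nonnegative: if $\varphi^\star_{t_0}<0$ somewhere, replacing $\varphi^\star$ by its positive part $\max(\varphi^\star,0)$ preserves the $f_p$-constraint and \emph{strictly} decreases $\mathcal{J}_H$, since the noiseless Ornstein--Uhlenbeck flow started from $0$ cannot go negative. You instead symmetrize via $\tilde\varphi=|\varphi|$ and observe that the action is \emph{preserved exactly}, because $\dot{\tilde\varphi}_t+\gamma\tilde\varphi_t=\mathrm{sign}(\varphi_t)\,(\dot\varphi_t+\gamma\varphi_t)$ a.e., so the squared integrands agree pointwise. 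This has a real advantage for the direction that is actually nontrivial: the positive-part substitution preserves $\int_0^H f_p(\varphi)\,dt\geq x$ but can only decrease $\int_0^H|\varphi|^p\,dt$, so it cannot be used to map feasible points of $\overline J_H^\star$ to feasible points of $J_H^\star$. Your absolute-value substitution does exactly that, giving the hard inequality $J_H^\star\leq\overline J_H^\star$ cleanly, while $\overline J_H^\star\leq J_H^\star$ is the trivial feasible-set inclusion you also note; moreover you avoid any appeal to existence of an exact (or approximate) minimizer. The one technical point you flag --- that $\dot\varphi=0$ a.e.\ on $\{\varphi=0\}$ for absolutely continuous $\varphi$, so the Lipschitz chain rule gives $\dot{|\varphi|}_t=\mathrm{sign}(\varphi_t)\dot\varphi_t$ a.e.\ --- is indeed standard and closes the argument; when writing it up, just make this identity and the feasibility check $F_H^p(|\varphi|)=\overline F_H^p(\varphi)\geq x$ explicit.
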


\begin{proof}
  In order to reach the desired result we show that
  a minimizer~$\varphi^\star$ of~$J_H^\star(x)$
  satisfies $\varphi^\star\geq0$ (note that such a minimizer exists by
  standard variational analysis arguments, or similarly consider an almost minimizer).
  We proceed by contradiction by assuming that~$\varphi^\star$ is such
  a minimizer for which there exists~$t_0\in[0,H]$ such that
  $\varphi_{t_0}<0$. We assume for simplicity that $t_0\neq H$ (a case
  easily deduced from the proof below) and note that $t_0\neq 0$ since $\varphi_0 = 0$.
  Since~$\varphi^\star$ is continuous, this means that
  $V = \{t\in[0,H], \, \varphi_t^\star <0\}$ is an open set with non-zero Lebesgue measure.

  Next, we can define 
  $\overline \varphi^\star = \max(\varphi^\star, 0)$ and observe
  that this function is absolutely continuous (because its absolute variations
  are smaller than the ones of~$\varphi^\star$).
  Moreover, it satisfies the constraint of the optimization problem since
  \[
  \int_0^H f_p(\overline \varphi^\star_t)\,dt \geq   \int_0^H f_p( \varphi^\star_t)\,dt
  \geq x.
  \]

  Let us then show that $\mathcal{J}_H(\overline \varphi^\star) < \mathcal{J}_H(\varphi^\star)$
  to obtain a contradiction. For this we note
  that $d\overline\varphi^\star_t/dt=\overline \varphi^\star_t=0$ Lebesgue
  almost-everywhere on~$V$ (and $\varphi^\star = \overline \varphi^\star$
  on $(0,H)\setminus V$) to write
  \[
  \begin{aligned}
  \mathcal{J}_H(\overline \varphi^\star) & = \frac12 \int_0^H
  \left|\frac{d}{dt}\overline \varphi^\star_t + \gamma\overline\varphi^\star_t\right|
\\ & = \frac12 \int_{[0,H]\setminus V}
  \left|\frac{d}{dt}\overline \varphi^\star_t + \gamma\overline\varphi^\star_t\right|
  \\ & = \frac12 \int_{[0,H]\setminus V}
  \left|\frac{d}{dt} \varphi^\star_t + \gamma\varphi^\star_t\right|
  \\ &  <\frac12 \int_{[0,H]}
  \left|\frac{d}{dt} \varphi^\star_t + \gamma\varphi^\star_t\right|.
\end{aligned}
  \]
  The last line comes from the fact that, if $t_0\in(t_-,t_+)\subset V$ is such that
  $\varphi^\star_{t_-}=\varphi^\star_{t_+}=0$ and $\varphi_{t_0}^\star<0$, it is
  impossible for the following condition to hold true:
  \[
 \frac{d}{dt} \varphi^\star_t = - \gamma \varphi^\star_t\quad \mbox{over}\quad (t_-, t_+),
  \]
which means that
  \[
\frac12 \int_{t_-}^{t_+}
  \left|\frac{d}{dt} \varphi^\star_t + \gamma\varphi^\star_t\right|^2>0.
  \]
  This entails that
  \[
  \mathcal{J}_H(\overline \varphi^\star) < \mathcal{J}_H(\varphi^\star),
  \]
  contradicting that~$\varphi^\star$ is a minimizer of~$\mathcal{J}_H$. We then deduce 
  that~$\varphi^\star\geq 0$, from which the desired result follows.
\end{proof}

\section{Proof of Lemma~\ref{lem:CT}}
\label{sec:CT}

We decompose over the two stopping times~$\tau_1^{\eps_0}$ and~$\tau_1$:
\[
\proba\left( C_1^T \geq x\right)
= \proba\left(\int_0^{\tau_1^{\eps_0}}f_p(X_s)\,ds
  + \int_{\tau_1^{\eps_0}}^{\tau_1}f_p(X_s)\,ds \geq Tx\right).
\]
Denoting by
\[
A^{\eps_0} = \int_0^{\tau_1^{\eps_0}}f_p(X_s)\,ds,
\]
and introducing some $\delta>0$, we have
\[
\begin{aligned}
\proba\left( C_1^T \geq x\right)=&
\proba\left(A^{\eps^0}
+ \int_{\tau_1^{\eps_0}}^{\tau_1}f_p(X_s)\,ds \geq Tx, \, A^{\eps^0}> \delta T\right)
+ \proba\left(A^{\eps^0}
+ \int_{\tau_1^{\eps_0}}^{\tau_1}f_p(X_s)\,ds \geq Tx, \, A^{\eps^0}\leq \delta T\right)
\\ &\leq \proba\left(\int_0^{\tau_1^{\eps_0}}f_p(X_s)\,ds > \delta T
\right)
+ \proba\left(
\int_{\tau_1^{\eps_0}}^{\tau_1}f_p(X_s)\,ds \geq (x-\delta)T\right)
\\ & = 
 \underbrace{\proba\left(\int_0^{\tau_1^{\eps_0}}f_p(X_s)\,ds > \delta T 
  \right)}_{(A)} +
\underbrace{\proba_{\eps_0}\left(
\int_{0}^{\tilde \tau}f_p(X_s^T)\,ds \geq (x-\delta)\right)}_{(B)},
\end{aligned}
\]
where
\[
\tilde \tau = \inf\,\{t\geq 0 \ \, \mbox{s.t.}\ \, X_t = 0,\ \mbox{with}\ X_0=\eps_0\}.
\]

Let us start with the term~$(A)$. For $s\in [0,\tau^{\eps_0}]$ it holds
$X_s \leq \eps_0$. As a result, we can use Tchebychev's inequality according to
\[
\proba\left(\int_0^{\tau_1^{\eps_0}}f_p(X_s)\,ds > \delta T \right)
\leq \proba\big( \eps_0^p \tau^{\eps_0}> \delta T \big)
\leq \E\left[\e^{\tau^{\eps_0}}\right] \e^{-\frac{\delta}{\eps_0^p}T}.
\]
Hence
\[
\underset{T\to +\infty}{\limsup}\, \eps_T^2 \log\proba
\left(\int_0^{\tau_1^{\eps_0}}f_p(X_s)\,ds > \delta T \right) = - \infty,
\]
and this term can be neglected.

We can now turn to the second term $(B)$ and introduce a time horizon~$H>0$. We note
that when~$X$ is started at~$\eps_0>0$, the process is positive before hitting~$\tilde\tau$.
Therefore
\[
\begin{aligned}
(B) = \proba_{\eps_0}\left(\int_{0}^{\tilde \tau}|X_s^T|^p\,ds \geq (x-\delta)\right)
 & \leq \proba_{\eps_0}\left(\int_{0}^{H}|X_s^T|^p\,ds \geq (x-\delta),
  \ \tilde \tau\leq H\right) + \proba(\tilde \tau > H).
  \\ & \leq \proba_{\eps_0}\left(\int_{0}^{H}|X_s^T|^p\,ds \geq (x-\delta)\right)
  + \proba(\tilde \tau > H).
\end{aligned}
\]
Since~$H$ is fixed, we can neglect $\proba(\tilde \tau > H)$ in the large~$T$
asymptotics. Moreover, for $\delta<x$,
Theorem~\ref{th:FW} and Lemma~\ref{lem:continuous} allow to obtain:
\[
\underset{T\to +\infty}{\limsup}\, \eps_T^2 \log\proba_{\eps_0}\left(
\int_{0}^{\tilde \tau}f_p(X_s^T)\,ds \geq (x-\delta)\right)
\leq - (x-\delta)^{\frac2p}\inf_{\varphi\in\mathscr{C}([0,H])}\left\{  \mathcal{J}_H(\varphi),
\quad \int_0^H |\varphi_t|^p\,dt \geq 1,\ \varphi_0=\eps_0 \right\}.
\]
By using Appendix~\ref{sec:cutting} below, this is equivalent to
\begin{equation}
  \label{eq:ineqeps0}
\underset{T\to +\infty}{\limsup}\, \eps_T^2 \log\proba_{\eps_0}\left(
\int_{0}^{\tilde \tau}f_p(X_s^T)\,ds \geq (x-\delta)\right)
\leq - (x-\delta)^{\frac2p}\inf_{\varphi\in\mathscr{C}([0,+\infty))}\left\{
  \mathcal{J}_\infty(\varphi),
\quad \int_0^\infty |\varphi_t|^p\,dt \geq 1,\ \varphi_0=\eps_0 \right\}.
\end{equation}
We thus want to show that
\[
\underbrace{\inf_{\varphi\in\mathscr{C}([0,+\infty])}\left\{  \mathcal{J}_{\infty}(\varphi),
  \quad \int_0^{\infty} |\varphi_t|^p\,dt \geq 1,\ \varphi_0=0 \right\}}_{(C)}
\leq
\underset{\eps_0\to0}{\lim}
\underbrace{\inf_{\varphi\in\mathscr{C}([0,+\infty])}\left\{  \mathcal{J}_\infty(\varphi),
\quad \int_0^\infty |\varphi_t|^p\,dt \geq 1,\ \varphi_0=\eps_0 \right\}}_{(C_{\eps_0})}.
\]
For this, let us consider~$\eps>0$ and an $\eps$-minimizer
$\varphi^{\eps_0}\in C_{\eps_0}([0,+\infty))$ of~$(C_{\eps_0})$, namely
\[
\mathcal{J}_\infty(\varphi^{\eps_0}) < (C_{\eps_0}) + \eps, \quad
\int_0^\infty |\varphi_t^{\eps_0}|^p\,dt \geq 1.
\]
We next define
\[\forall\,t\in[0,\infty],\quad
\hat \varphi_t^{\eps_0} = \eps_0 t \mathds{1}_{\{ t \leq 1 \}}
+ \varphi^{\eps_0}_{t-1}\mathds{1}_{\{ t \geq 1 \}}.
\]
Since $\hat \varphi \in C_0([0,H])$ it holds
\[\begin{aligned}
(C)
&\leq \mathcal{J}_\infty(\hat \varphi^{\eps_0})
\\ &= \frac12 \int_0^1 |\eps_0 + \gamma \eps_0 t|^2\,dt + \frac12
\int_1^\infty|\dot \varphi^{\eps_0}_{t-1} +\gamma\varphi^{\eps_0}_{t-1}|^2
\,dt
\\  & = \frac{\eps_0^2}{2}\left(1 + \frac43\gamma\right)
+\int_0^\infty|\dot \varphi^{\eps_0}_{t'} +\gamma\varphi^{\eps_0}_{t'}|^2
\,dt'
\\ & = \frac{\eps_0^2}{2}\left(1 + \frac43\gamma\right) + \mathcal{J}_\infty(\varphi^{\eps_0})
\\ & < \frac{\eps_0^2}{2}\left(1 + \frac43\gamma\right) + (C_{\eps_0}) + \eps,
\end{aligned}
\]
where we used the change of variable $t'=t-1$. By taking the limit $\eps_0,\eps\to0$
we obtain that $(C)\leq (C_{\eps_0})$, which turns~\eqref{eq:ineqeps0} into
\[
\underset{T\to +\infty}{\limsup}\, \eps_T \log\proba_{\eps_0}\left(
\int_{0}^{\tilde \tau}f_p(X_s^T)\,ds \geq (x-\delta)\right)
\leq - (x-\delta)^{\frac2p}J_\infty^\star.
\]
Taking the limit $\delta \to 0$ concludes the proof of Lemma~\ref{lem:CT}.

\section{Long time behaviour of variational problem}
\label{sec:cutting}

In this section we follow the simple approximation argument used
in~\cite[Lemma~3.1]{bazhba2022large} to prove that, for any $x_0\in\R$, it holds
\[
\underset{H\to\infty}{\lim}
\inf_{\varphi\in \mathscr{C}_{x_0}([0,H])}\left\{
\mathcal{J}_H(\varphi),\quad \int_0^H f_p(\varphi_t)\,dt=1
\right\} = \inf_{\varphi\in \mathscr{C}_{x_0}([0,+\infty])}\left\{
\mathcal{J}_\infty(\varphi),\quad \int_0^\infty f_p(\varphi_t)\,dt=1
\right\}.
\]
Thanks to Lemma~\ref{lem:FW} in Appendix~\ref{sec:instanton}, this is equivalent to proving that
\[
\underset{H\to\infty}{\lim}
\underbrace{\inf_{\varphi\in \mathscr{C}_{x_0}([0,H])}\left\{
\mathcal{J}_H(\varphi),\quad \int_0^H |\varphi_t|^p\,dt \geq 1
\right\}}_{(A_H)} =
\underbrace{\inf_{\varphi\in \mathscr{C}_{x_0}([0,+\infty])}\left\{
\mathcal{J}_\infty(\varphi),\quad \int_0^\infty |\varphi_t|^p\,dt \geq 1 
\right\}}_{(B)}.
\]
Let us start proving that $\lim_{H\to\infty}(A_H)\geq (B)$
by considering a minimizer~$\varphi^H$ of~$(A_H)$
(or quasi-minimizer equivalently) on~$[0,H]$ that we extend to~$[0,+\infty)$ through
  \[
  \forall\,t\in[0,+\infty),\quad
    \tilde \varphi^H_t = \varphi^H_t \ind_{\{ t\leq H\}} + \varphi^H_H
    \e^{ - \gamma (t-H)}\ind_{\{ t> H\}}.
\]
The function~$\tilde \varphi^H$ belongs to~$\mathscr{C}_{x_0}([0,+\infty])$,
satisfies the constraint since $\overline F_p^\infty(\tilde \varphi^H)\geq\overline F_p^H( \varphi^H)\geq 1$, 
and we have
\[
\mathcal{J}_\infty(\tilde \varphi^H)
=\mathcal{J}_H(\varphi^H) +\frac{(\varphi^H_H)^2}{2} \int_H^{\infty}
\underbrace{\left|- \gamma \e^{ - \gamma (t-H)} + \gamma\e^{ - \gamma (t-H)}\right|^2}_{=0}\,dt.
\]
Hence
\[
\mathcal{J}_H(\varphi^H) = \mathcal{J}_\infty(\tilde \varphi^H)\geq (B).
\]
This entails that $\lim_{H\to\infty}(A_H)\geq (B)$.

Let us now turn to $\lim_{H\to\infty}(A_H)\leq (B)$ by considering~$\varphi$ a (quasi) minimizer
of~$(B)$ and introducing
\[
\forall\,t\in[0,H],\quad
\varphi^H_t = c_H \varphi_t \ind_{\{ t\leq H\}},
\]
where the factor
\[
c_H = \left( \int_0^H |\varphi_t|^p\,dt\right)^{-\frac1p}
\]
ensures that the constraint $\overline F_p^H( \varphi^H)=\overline F_p^\infty( \varphi^H)= 1$
is satisfied. A simple calculation shows that:
\[
(B) =\mathcal{J}_\infty(\varphi) \geq c_H^{-2} \mathcal{J}_\infty(\varphi^H) =
c_H^{-2} \int_0^H |\dot \varphi_t^H + \gamma \varphi_t^H|^2 \,dt\geq c_H^{-2} (A).
\]
Due to the constrain on~$\varphi$ it holds~$c_H\xrightarrow[]{H\to+\infty} c_\infty\leq 1$
so by taking the limit~$H\to\infty$ we obtain that $(B)\geq\lim_{H\to\infty} (A_H)$
and the proof is complete.

\section{Large deviations of the number of cycles}
\label{sec:MT}

In this section we consider~$N_T$ defined in~\eqref{eq:NT} and introduce the shorthand
notation:
\[
\forall\,k\geq 1,\quad S_k = \sum_{i=1}^k \tau_i.
\]
Let us compute an exponential bound on~$\proba(N_T\notin \mathcal M_{T,\bar \eps})$
by following now~\cite[Section~5]{bazhba2022large}.
For any $\bar \eps >0$ we write
\[
\begin{aligned}
  \proba(N_T\notin \mathcal M_{T,\bar \eps}) &=
  \proba\left( \left| \frac{N_T}{T} - \frac{1}{\E[\tau]}\right| \geq \bar \eps
  \right)
  \\ & = \proba\left(  N_T \geq T\left(\frac{1}{\E[\tau]}+ \bar \eps\right)
  \right) +\proba\left(  N_T \leq T\left(\frac{1}{\E[\tau]}- \bar \eps\right)
  \right).
\end{aligned}
\]
By symmetry we can restrict to studying the first probability in the last line
above. By introducing
\[
T' = T\left(\frac{1}{\E[\tau]}+ \bar \eps\right)
\]
we obtain
\[
\begin{aligned}
\proba\left(  N_T \geq T\left(\frac{1}{\E[\tau]}+ \bar \eps\right)
\right) & = \proba\left( \max\,\{k,\, S_k \leq T\}\geq T\left(\frac{1}{\E[\tau]}+ \bar \eps\right)
\right) \\ & =
\proba \left( S_{\left\lfloor T\left(\frac{1}{\E[\tau]}+ \bar \eps\right)\right\rfloor }
\leq T \right)
\\ & = \proba \left( \frac{S_{\left\lfloor T'\right\rfloor }}{T'}
\leq\frac{\E[\tau]}{ 1 + \bar \eps\E[\tau]}  \right).
\end{aligned}
\]
We recall that
\[
\frac{S_{\left\lfloor T'\right\rfloor }}{T'}
= \frac{1}{T'} \sum_{i=1}^{\lfloor T' \rfloor} \tau_i,
\]
and note that for any $\bar \eps >0$ it holds
\[
\frac{\E[\tau]}{ 1 + \bar \eps\E[\tau]} < \E[\tau].
\]
Since~$\tau$ is the sum of two stopping times of an Ornstein--Uhlenbeck process, we easily show
that
\[
\E\left[\e^{ \tau}\right]<+\infty.
\]
Therefore, since the~$\tau_i$ are indenpendent
and identically distributed, Cramer's theorem states that there exists $c_{\bar \eps}>0$
such that
\[
\underset{T'\to\infty}{\limsup}\, \frac{1}{T'}\log \proba \left( \frac{S_{\left\lfloor T'\right\rfloor }}{T'}
\leq\frac{\E[\tau]}{ 1 + \bar \eps\E[\tau]}  \right)
\leq - c_{\bar \eps}.
\]
Using a simple change of variable and the computations above, this provides
\[
\underset{T\to\infty}{\limsup}\, \frac1T\log  \proba(N_T\notin \mathcal M_{T,\bar \eps} )
\leq - c_{\bar \eps}.
\]
We therefore obtain that
\[
\underset{T\to\infty}{\limsup}\, \eps_T^2\log  \proba(N_T\notin \mathcal M_{T,\bar \eps} )
 = - \infty
\]
so the deviations of the number of cycles can be neglected at the scale we are interested in.

\section*{Acknowledgements}
The author is deeply grateful towards Hugo Touchette for the so many discussions on
large deviations that for sure helped to build his understanding of the problem
under consideration.

\bibliographystyle{abbrv}

\end{document}